\theoremstyle{plain}% default
\newtheorem{thm}{Theorem}[section]
\newtheorem{lem}[thm]{Lemma}
\theoremstyle{definition}
\newtheorem{dfnt}{Definition}[section]
\theoremstyle{remark}
\crefname{thm}{theorem}{theorems}
\crefname{lem}{lemma}{lemma}
\crefname{prop}{proposition}{propositions}
\crefname{cor}{corollaire}{corollaires}
\crefname{dfnt}{definition}{definitions}
\crefname{exmp}{exemple}{exemples}
\crefname{xca}{exercice}{exercices}
\crefname{rmq}{remarque}{remarques}
\crefname{note}{note}{notes}
\crefname{case}{case}{cases}
\crefname{hyp}{hypothese}{hypothèse}
\newcommand{\Tgn}{\mathcal{T}_{g}}
\newcommand{\PTgn}{P^1 \mathcal{T}_{g}}
\newcommand{\Mgn}{\mathcal{M}_{g}}
\newcommand{\PMgn}{P^1 \mathcal{M}_{g}}
\newcommand{\hfem}{\hat{f}_{\epsilon_0,\mu}}
\newcommand{\fem}{f_{\epsilon_0,\mu}}
\newcommand{\ML}{\mathcal{ML}}
\begin{document}

\title{Upper bound on the rate of mixing for the Earthquake flow on moduli spaces}
\author[1]{Bonnafoux, Etienne}
\affil[1]{Centre de mathématiques Laurent-Schwartz, Ecole polytechnique}
%\address{Ecole polytechnique}
%\email{etienne.bonnafoux@polytechnique.edu}
\maketitle

\begin{abstract}
  We prove that the earthquake flow is at most polynomially mixing with a degree bounded by a constant depending only on the topology of the surface. In particular it is not exponentially mixing.
\end{abstract}

\section{Introduction}
\subsection*{Motivation}

The earthquake flow is an object introduced by Thurston which was used by Kerckhoff \cite{NielsenRealizationPro} to solve the Nielsen realisation problem of describing finte subgroup of mapping class group by hyperbolic isometries.

From the point of view of ergodic theory, the earthquake flow is a mysterious object despite tough its connections with other areas of mathematics : see \cite{wright2021mirzakhanis} for a survey. Nonetheless, Mirzakhani \cite{Mirzakhani2010ErgodicTO} sucessfully compared it to the so-called Teichmüller horocyclic flow :

\begin{thm}[Mirzakhani]
There is measurable conjugacy between the earthquake flow on the unit lamination bundle of the moduli space $E^t : P^1 \mathcal{M}_{g} \to P^1 \mathcal{M}_{g}$ with a canonical measure $\nu$,  and the Teichmüller horocyclic flow $U^t : P^1 \mathcal{Q}_{g} \to P^1 \mathcal{Q}_{g} $ on the unit area quadratic differentials bundle over the moduli space with the Masur-Veech measure $\mu_{MV}$ .
\end{thm}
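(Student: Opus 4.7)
The plan is to construct an explicit measurable map $\Phi : P^1\mathcal{Q}_g \to P^1\mathcal{M}_g$ intertwining the two flows, built from Thurston's correspondence between measured foliations on flat surfaces and measured geodesic laminations on hyperbolic surfaces. The guiding principle is that both flows act linearly in appropriate coordinates: $U^t$ fixes the horizontal foliation class while translating the vertical cocycle by $t$, whereas $E^t$ translates Bonahon's shear-cocycle parametrization of Teichmüller space by $t\lambda$ along the earthquake lamination $\lambda$. Matching these two linear actions via a coordinate identification is what produces the conjugacy.

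Concretely, I would proceed in three steps. First, fix a fiberwise straightening: given $(X,q) \in \mathcal{Q}_g$, associate to the horizontal measured foliation $\mathcal{F}^h_q$ the measured geodesic lamination $\lambda_q$ obtained by replacing each leaf with its geodesic representative on some hyperbolic structure. Second, and this is the heart of the construction, canonically attach a hyperbolic structure $Y$ to $(X,q)$ so that the transverse measure of the vertical foliation of $q$, viewed as a cocycle against $\mathcal{F}^h_q$, coincides with the Bonahon shear cocycle of $Y$ against $\lambda_q$; this uses the fact that a point in Teichmüller space is determined by its shears along any maximal lamination. Third, verify that the resulting $\Phi : (X,q) \mapsto (Y, \lambda_q)$ descends to the mapping-class-group quotient and conjugates the two flows: once coordinates are matched, the intertwining property becomes tautological, reducing to the linearity of both flows in those coordinates.

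The main obstacle is to show that $\Phi_* \mu_{MV} = \nu$. The two measures are defined intrinsically on very different spaces: $\mu_{MV}$ via period coordinates on strata of $\mathcal{Q}_g$, and $\nu$ via Thurston's symplectic form on $\mathcal{ML}_g$ combined with the Weil--Petersson form on $\mathcal{M}_g$. I expect the equality to come from the fact that the straightening/shear identification intertwines these two symplectic structures, a statement close in spirit to Bonahon's analysis of the symplectic geometry of measured laminations and to Mirzakhani's integration formulae. A secondary difficulty is that $\Phi$ cannot be expected to be continuous, only measurable, because the straightening procedure is only measurable in full generality; the whole argument therefore has to be carried out in the measurable category, with careful attention to the null sets where the construction degenerates (non-uniquely ergodic foliations, zeros of $q$, and the locus where the straightening fails to be injective).
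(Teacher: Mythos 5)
The paper does not actually prove this statement: it is quoted as background, attributed to Mirzakhani \cite{Mirzakhani2010ErgodicTO}, so your proposal can only be compared against the known proof in the literature. On that comparison, your architecture is essentially the right one, and essentially Mirzakhani's: her conjugacy sends $(X,\lambda)$ to the quadratic differential with horizontal lamination $\lambda$ and vertical data given by Thurston's horocyclic foliation $F_\lambda(X)$, whose transverse cocycle is exactly Bonahon's shear cocycle of $X$ along $\lambda$; the intertwining then reduces, as you say, to the two linear facts that earthquake along $\lambda$ translates the shear cocycle by $t\lambda$ while the Teichmüller horocycle flow fixes the horizontal measure $h$ and sends the vertical cocycle $v$ to $v+th$. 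Your closing remarks about measurability and the degenerate null sets are also correct in spirit: the map is only measurable, and one restricts to a full-measure set.

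The genuine gap is your third step, which you state only as an expectation, whereas it is the substantive core of the theorem. The equality $\Phi_*\mu_{MV}=\nu$ (up to normalization) rests on two concrete inputs you would need to supply: (i) the Bonahon--S\"oz\"en theorem that, for $\lambda$ maximal, the shear (equivalently horocyclic-foliation) map $X\mapsto F_\lambda(X)$ pulls back the Thurston symplectic form on the space of transverse cocycles to the Weil--Petersson form, which is what converts the fiberwise $\mu_{WP}$ factor of $\nu$ into Thurston measure on the vertical data; and (ii) the fact that in period/train-track coordinates the Masur--Veech measure disintegrates as a product of Thurston measures on the horizontal and vertical pieces. Without (i) and (ii) the ``symplectic structures match'' claim has no content. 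Two smaller but real omissions: the shear parametrization requires $\lambda$ maximal, so you must restrict to the principal stratum of $\mathcal{Q}_g$ (full $\mu_{MV}$-measure) and to maximal laminations (full Thurston-measure), and you must check that the positive transverse measure of the vertical foliation actually lies in the open convex cone that Bonahon identifies as the image of the shear map --- it is false in general that shear cocycles are measures, so the identification of measures with shears only holds on this cone; finally, the unit normalizations must be matched via $l_\lambda(X)=i(\lambda,F_\lambda(X))=\mathrm{area}(q)$, which is what makes $\Phi$ respect the bundles $P^1\mathcal{M}_g$ and $P^1\mathcal{Q}_g$ and their flows without time change.
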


An immediate corollary of this theorem is that the earthquake flow is mixing with respect to the canonical measure $\nu$. However, this conjugacy is not smooth enough (see Mirzakhani \cite{Mirzakhani2010ErgodicTO}, Calderon and Farre \cite{calderon2021shearshape}, Arana-Herrera and Wright \cite{AranaEtWright})
 to allow the computation of the rate of mixing of the earthquake flow by comparison with the one of the Teichmüller horocyclic flow (which is known to be polynomial after works of Ratner \cite{ratner_1987}, Avila, Gouezel and Yoccoz \cite{avila2006exponential} and Avila and Resende \cite{avila2012exponential}).

\subsection*{Results}

In this paper we exclude very fast rates of mixing of the earthquake flow. In fact we prove that the earthquake flow is at most polynomially mixing.

\begin{thm}
 \label{Thprincipal}
 Suppose that there are constants $d$  and $C$ so that
 \begin{equation}
   \label{eq2}
   \left\vert \int f g\circ E^t d \nu - \int f d \nu \int g d \nu \right\vert < C \frac{1}{t^{d}}  \|f \|_{Lip}  \|g \|_{Lip}
 \end{equation}
 for all Lipschitz functions $f,g : P^1 \mathcal{M}_{g} \to \mathbb{R}$ and all times $t \geq 1$ (where $\| \cdot \|_{Lip}$ will be defined in Subsection \ref{Norm_Lip} below). Then $d \leq 6g-5$.
\end{thm}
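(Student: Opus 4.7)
The plan is to establish the upper bound $d \leq 6g-5$ by exhibiting, for arbitrarily large $t$, Lipschitz test functions $f_n,g_n$ whose correlations decay no faster than $t^{-(6g-5)}$. The key dynamical input is the existence of long closed orbits of the earthquake flow: every primitive simple closed geodesic $\gamma$ on a hyperbolic surface $X_0$ yields a periodic point $(X_0, \gamma/\ell_{X_0}(\gamma)) \in \PMgn$ of period $T = \ell_{X_0}(\gamma)^2$, the period arising as the time needed for the earthquake flow to realize one full Dehn twist along $\gamma$. Fixing $X_0$ in the thick part of moduli space and choosing a sequence of simple closed geodesics $\gamma_n$ with lengths $\ell_n \to \infty$ produces closed orbits of periods $T_n \to \infty$.

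Near each such orbit the flow is controlled by Fenchel--Nielsen coordinates $(\ell_i, \tau_i)_{i=1}^{3g-3}$ adapted to a pants decomposition containing $\gamma_n$. In these coordinates the earthquake flow along $\gamma_n/\ell_n$ acts as pure translation in the twist parameter $\tau_{\gamma_n}$ at rate $1/\ell_n$, and the Poincaré return map at time $T_n$ is the identity on the orbit itself with a parabolic shear structure in the $(\ell_{\gamma_n}, \tau_{\gamma_n})$-plane transverse to the orbit (a perturbation $u$ of the length coordinate produces a return shift in the twist coordinate proportional to $u$), together with approximate identity behaviour in the remaining Fenchel--Nielsen coordinates of $\Tgn$ and in the $(6g-7)$-dimensional transverse measured-lamination directions in $\ML$. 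I would then construct Lipschitz bump functions $f_n = g_n$ concentrated in a thin tube of transverse radius $\delta_n$ around the $n$-th orbit, with Lipschitz norm of order $1/\delta_n$; the near-identity structure of the return map yields a correlation at time $T_n$ bounded below by a constant multiple of $\int f_n^2\, d\nu$, i.e.\ by the natural-metric volume of the tube. Combining this lower bound with the hypothesised polynomial mixing inequality and optimising the scaling of $\delta_n$ with $T_n$, subject to the injectivity constraint that the tube embed in the relevant slice of $\PMgn$, should yield $d \leq 6g-5$.

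The exponent $6g-5 = \dim \Tgn + 1$ reflects the effective dimensional setting of the obstruction: the $(6g-6)$-dimensional Fenchel--Nielsen slice of $\Tgn$ on which the first return map is essentially trivial, together with the one-dimensional flow direction, rather than the full $(12g-13)$-dimensional ambient space $\PMgn$. The main obstacle is quantifying the correlation lower bound with enough precision to produce exactly this exponent: a naive bump-function argument in the full ambient space gives an inequivalent bound, so the plan hinges on localising the test functions to the $(6g-5)$-dimensional invariant slice and on carefully separating the parabolic transverse direction (where successive iterations of the return map spread the tube linearly) from the near-identity transverse directions (where the self-overlap persists), all compatibly with the specific choice of $\|\cdot\|_{Lip}$ deferred to Subsection~\ref{Norm_Lip}.
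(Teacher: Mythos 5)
There is a genuine gap here, and in fact your overall route diverges from what can be made to work. Your plan rests on two claims that fail. First, you assert that the time-$T_n$ return map near the periodic orbit $(X_0,\gamma_n/\ell_n)$ is a parabolic shear in the $(\ell_{\gamma_n},\tau_{\gamma_n})$-plane and \emph{approximately the identity} in the remaining transverse directions, in particular in the $(6g-7)$ lamination directions. This is not true: the earthquake flow fixes its own lamination, so perturbing $\lambda$ away from $\gamma_n/\ell_n$ changes the \emph{generator} of the flow, and the resulting divergence of orbits is linear in $t$ (a lamination perturbation of size $\eta$ produces a displacement of order $T_n\eta$ after one period, exactly as in the twist direction). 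Thus essentially \emph{all} transverse directions are shear directions, forcing transverse radii $\delta_n \lesssim 1/T_n$ in roughly $12g-14$ directions; running your own correlation bound $\int f_n^2\,d\nu \lesssim T_n^{-d}\|f_n\|_{Lip}^2$ with this scaling gives at best an exponent of order $12g-12$, not $6g-5$ — you concede this when you call the naive bound ``inequivalent.'' Second, your proposed fix — localising $f_n$ to the $(6g-5)$-dimensional invariant slice — is incoherent: a function supported on a positive-codimension slice has $\int f_n^2\,d\nu = 0$, and any thickening of the slice reintroduces precisely the sheared directions you were trying to excise. There is also a regularity obstruction lurking: the earthquake flow is only continuous (not smooth) in the lamination variable, so no Poincaré-section linearisation of the kind you invoke is available; and hard \emph{lower} bounds on correlations at specific return times for parabolic-type flows are exactly the delicate point your sketch leaves unaddressed.

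The paper's proof avoids all local analysis near periodic orbits. It instead exploits slow escape from the thin part: by the Minsky--Weiss framing lemma (Lemma \ref{LemFra}), if $[X,\lambda]$ has systole in $[\epsilon_0/2,\epsilon_0]$ and $i(\lambda,\gamma)\leq\mu$ for every short curve $\gamma$, then $l_\gamma$ stays below $\rho$ for all $|t|\leq t_{lim}=\rho/2\mu$, so the orbit cannot enter the thick set $D$ before time $t_{lim}$. Taking $f_{\epsilon_0,\mu}$ supported on this set $\Omega_{\epsilon_0,\mu}$ and $g$ supported in $D$, the correlation at time $t_{lim}$ equals $\int f\,d\nu\int g\,d\nu$ \emph{exactly} (disjoint supports), sidestepping any correlation lower bound. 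The exponent then comes from two elementary estimates: $\|f_{\epsilon_0,\mu}\|_{Lip}\lesssim 1/\mu$, and $\nu(\Omega_{\epsilon_0,\mu})\gtrsim\mu^{6g-6}$ via the degree-$(6g-6)$ homogeneity of the \emph{Thurston measure} on the lamination fibers together with the collar lemma. So $6g-5=(6g-6)+1$ is Thurston-measure homogeneity in $\ML$ plus one power of $\mu$ from the Lipschitz norm — your reading of it as $\dim\Tgn+1$, i.e.\ the dimension of an invariant Fenchel--Nielsen slice, is a numerical coincidence ($\dim\ML=\dim\Tgn=6g-6$) that misidentifies the mechanism. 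The missing idea in your proposal is precisely this trade-off between escape time from the thin part ($\sim 1/\mu$) and the $\mu^{6g-6}$ volume of the constrained lamination set; no version of the closed-orbit tube argument supplies it.
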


We prove our results using a strategy similar to that used by K. Burns et al. \cite{RateWPflow} to show that the rate of mixing of the Weil-Petersson geodesic flow is not fast in general. More concretely, assuming that the earthquake flow is polynomially mixing, we exhibit a family of test functions whose supports have relatively large volumes while taking a long time to reach the thick part of the moduli space.

\subsection*{Structure of the paper}

In Section \ref{NaB}, we brifely recall the definitions and the basics properties of the earthquake flow and its phase space. After that, we prove Theorem \ref{Thprincipal} in Section \ref{RoM} by constructing adequate test functions thanks to the strong non-divergence property of the earthquake flow established by Minsky and Weiss in \cite{articleMetW}.

\section{Preliminaries}
\label{NaB}

\subsection{Notations}

Let $S$ be a surface of genus $g$.
The Teichmüller space $\Tgn$ is the space of hyperbolic metrics marked by $S$. The mapping class group $Mod(S)$ is the quotient of the group of orientation preserving diffeomorphisms by the identity component subgroup. It acts on $\Tgn$ and the quotient $\Tgn / Mod(S) = \Mgn$ is called the moduli space of curves of genus $g$.

 We denote by $\ML$ the space of measured laminations and by $\ML(\mathbb{Z})$ the space of multicurves on the surface $S$.
  Let $\Gamma $ be the set of free homotopy equivalence classes of simple closed curves. For a lamination $\lambda$ and a hyperbolic metric $X \in \Tgn$, let $l_\lambda(X)$ be the hyperbolic length.
  We denote by $\PTgn := \{ (X,\lambda) \in \Tgn \times \ML, l_{\lambda}(X)=1 \} $ the unit sub-bundle of measured lamination over the Teichmüller space
 and $\PMgn$ the quotient $\PTgn / Mod(S)$.

Given a pants decomposition $\gamma_1,\dots,\gamma_{3g-3}$, we have a natural symplectic $2$-form on $\Tgn$ : $$
d \omega_{WP} = \sum_{i=1}^{3g-3} dl_{\gamma_i} \wedge d \tau_{\gamma_i},
$$
 where $l_{\cdot}$ the length function and $\tau_{\cdot}$ the twist. As it was discovered by Wolpert \cite{Wolpert1,Wolpert2}, this symplectic form does not depend on the choice of pants decomposition, and it induces a volume form $\mu_{WP}$ called the Weil-Petersson measure.

Moreover, $\ML$ possesses a family of measures depending on $X \in \Tgn$ :
$$
\mu_{Th}'(X)(A) = \underset{L \to \infty}{\lim} \frac{ \# \{ \delta \in \Gamma \cap A, l_{\delta}(X) \leq L\}}{L^{6g-6}}.
$$
These measures are called Thurston measures and they can be projected to $P^1 \ML = \ML / \mathbb{R}^{>0}$ by setting:
$$
\mu_{Th}(X)(A)=\mu_{Th}'(X)(\{ \lambda \in \ML, l_{\lambda}(X) \leq 1,  [\lambda] \in A \}).
$$
The product measure $\mu_{WP}\times \mu_{Th}$ on $\PTgn$ is $Mod(S)$-invariant and, hence, it induces a measure on $\PMgn$ called $\nu$.
\subsection{The Earthquake flow}

The earthquake flow $\hat{E}^t(\cdot,\cdot): \PTgn \rightarrow \PTgn$ is defined in the following way:
\begin{itemize}
  \item For $(X,\gamma) \in \Tgn \times \Gamma$, $\hat{E}^t(X,\gamma)$ is a twist around $\gamma$ of length $t$.
  \item For $X \in \Tgn$, $\sum_{i=1}^n \gamma_i \in \ML(\mathbb{Z})$ and $(a_i) \in \mathbb{R}^n$,  then
  $$\hat{E}^t \left( X,\sum a_i \gamma_i \right) = \hat{E}^{a_1t}(\cdot,\gamma_1) \circ \hat{E}^{a_2t}(\cdot,\gamma_2)  \circ \dots \circ \hat{E}^{a_n t}(X,\gamma_n)$$
   (note that the order of the composition is irrelevant since the different terms are commuting).
  \item For $(X, \lambda) \in \Tgn \times \ML$, the flow is obtained by continuous extension: since the space of weighted multicurves is dense in the space of measured laminations, the earthquake along $\lambda \in \ML$ is the limit of earthquake along weighted multicurves converging to $\lambda$.
\end{itemize}
Because $\hat{E}^t(\cdot,\cdot)$ commutes with the natural $Mod(S)$-action, we have a flow $E^t([\cdot,\cdot])$ on $\PMgn$ also called earthquake flow. For more details on $E^t([\cdot,\cdot])$, see \cite{NielsenRealizationPro}.

\subsection{Lipschitz functions on $\PMgn$}

\label{Norm_Lip}

We need to define distances in order to talk about Lipschitz functions. For this sake, we consider metrics on $\Tgn$ and on $\ML$.

\begin{dfnt}
  The asymmetric Thurston distance (cf. \cite{thurston1998minimal}) on $\Tgn$ between $Y$ and $Y'$ is \[
  d_{Th}^{asym}(Y,Y'):= \log \left( \underset{\lambda \in \ML}{\sup}\frac{l_{\lambda}(Y)}{l_{\lambda}(Y')} \right)
  \]
  By symmetrizing $d_{Th}^{asym}$, we get the Thurston distance:
  \[
d_{Th}(Y,Y'):=\max(d_{Th}^{asym}(Y,Y'),d_{Th}^{asym}(Y',Y))
  \]
\end{dfnt}

Next, we fix a pants decomposition $\gamma_1,...,\gamma_{3g-3}$ in order to introduced a distance on the space of measured laminations.
If we denote by $i(\cdot,\cdot)$ the intersection number and by $t_{\cdot}(\cdot)$ the twisting number on $\ML \times \ML$, then we have the following theorem \cite{penner1992combinatorics}:

\begin{thm}[Dehn-Thurston coordinates]
  \label{DTcoord}
 The map
 \begin{align*}
   \ML & \to  (\mathbb{R}_{>0} \times \mathbb{R})^{3g-3} \cup (\{0\} \times \mathbb{R}_{\geq 0})^{3g-3} \\
   \lambda & \mapsto  (i(\gamma_1,\lambda),t_{\gamma_1}(\lambda),...,i(\gamma_{3g-3},\lambda),t_{\gamma_{3g-3}}(\lambda))
 \end{align*}
 is a bijection.
\end{thm}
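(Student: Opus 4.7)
The plan is to establish the bijection by decomposing $\lambda \in \ML$ along the pants decomposition $\gamma_1,\dots,\gamma_{3g-3}$ and analysing the lamination piece by piece, then showing that the intersection and twist data recover $\lambda$ uniquely and can be prescribed arbitrarily (subject to the stated constraints). Concretely, I would proceed in three steps: local classification on a pair of pants, injectivity via local-to-global reconstruction, and surjectivity by an explicit cut-and-glue construction.

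First, I would study the restriction of $\lambda$ to each pair of pants $P$ of the decomposition. After isotoping $\lambda$ to a standard position transverse to the boundary of $P$, the intersection numbers $i(\gamma,\lambda)$ with the three boundary curves $\gamma \subset \partial P$ determine the combinatorial type of $\lambda \cap P$: each connected component of $\lambda \cap P$ is an arc joining two boundary components, and the classical fact that a measured lamination on a pair of pants is determined (up to isotopy) by its three boundary intersection numbers allows me to reconstruct $\lambda\cap P$ from the triple $(i(\gamma_i,\lambda))_{\gamma_i \subset \partial P}$. The only subtlety is the possible presence of boundary-parallel leaves, which disappear when we view $\lambda$ globally since such leaves can only survive if two arcs on the two sides of a gluing curve match into a leaf parallel to that curve, which corresponds to an atom of $\lambda$ on $\gamma_i$; this case is exactly where $i(\gamma_i,\lambda)=0$.

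Next, for injectivity, I would show that the Dehn–Thurston coordinates recover $\lambda$. The intersection numbers $i(\gamma_i,\lambda)$ fix $\lambda \cap P$ on each pair of pants $P$ uniquely (by the local classification). Along each gluing curve $\gamma_i$ with $i(\gamma_i,\lambda)>0$, the two local pieces must be glued by some real shear, and the twist number $t_{\gamma_i}(\lambda)\in\mathbb R$ pins down this shear uniquely; when $i(\gamma_i,\lambda)=0$, only a non-negative atomic weight on $\gamma_i$ can be added, which is recorded by $t_{\gamma_i}(\lambda)\in\mathbb R_{\geq 0}$. For surjectivity, given any coordinate vector in the target, I would first build the unique measured lamination on each pair of pants realising the three prescribed boundary weights (this is where one uses the explicit formulas for the arc weights on a pair of pants in terms of the three boundary intersection numbers), then glue adjacent pieces along $\gamma_i$ with the prescribed twist parameter, adding an atomic weight on $\gamma_i$ when $i(\gamma_i,\lambda)=0$. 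Compatibility across the gluing is automatic since the number of arc endpoints on each side of $\gamma_i$ is $i(\gamma_i,\lambda)$ by construction.

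The main obstacle in this plan is the delicate bookkeeping at the stratum $\{i(\gamma_i,\lambda)=0\}$: one must explain why the twist parameter is forced to be non-negative there and why it becomes a genuine real parameter as soon as some transverse weight appears. This requires a careful definition of the twist functional $t_{\gamma_i}$ that extends continuously from the generic stratum (where Fenchel--Nielsen–type twist coordinates are straightforward) to the degenerate stratum where $\gamma_i$ itself is a leaf of $\lambda$, and one must check that the two definitions agree in the limit. Once this continuity is established, the piecewise bijection patches into a global bijection, which is exactly the content of the theorem.
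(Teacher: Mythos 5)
The paper does not actually prove this theorem: it is imported by citation from Penner--Harer \cite{penner1992combinatorics}, so there is no internal proof to compare against. Your outline is essentially the standard argument from that reference: put $\lambda$ in efficient position with respect to the pants decomposition, classify measured laminations in a pair of pants by their three boundary weights, glue adjacent pieces along each $\gamma_i$ with a real shear recorded by $t_{\gamma_i}$, and account for the degenerate stratum $i(\gamma_i,\lambda)=0$ by a non-negative atomic weight on $\gamma_i$. Two soft spots deserve attention. First, your local classification as phrased (``each connected component of $\lambda\cap P$ is an arc joining two boundary components'') must explicitly include the three arc types running from a boundary component back to itself; these carry positive weight precisely when the boundary triple $(x_1,x_2,x_3)$ violates the triangle inequality, with weights such as $\max\bigl(0,(x_1-x_2-x_3)/2\bigr)$ on the self-arc at boundary $1$, and the disjointness constraint (a self-arc at boundary $1$ excludes the arc joining boundaries $2$ and $3$) is exactly what makes the solution of the linear system unique. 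Without these arc types, surjectivity fails for all triples outside the triangle-inequality cone. Second, the step you state as ``the twist number pins down this shear uniquely'' is not a formality: well-definedness of $t_{\gamma_i}$ on isotopy classes and injectivity in the gluing parameter is where Penner--Harer invest the standard-model/train-track machinery (fixed windows on annular neighbourhoods of the $\gamma_i$), and it is the real content of the theorem rather than a consequence of the rest.

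Two smaller remarks. The continuity across strata that you flag as the ``main obstacle'' is in fact not needed for this statement: the theorem asserts only a bijection, not a homeomorphism, so the two strata can be treated separately once the twist convention is fixed; your worry is relevant only for the stronger topological version of Dehn--Thurston coordinates. Conversely, your per-curve analysis silently proves a \emph{corrected} version of the statement: the natural target of your construction is $\bigl((\mathbb{R}_{>0}\times\mathbb{R})\cup(\{0\}\times\mathbb{R}_{\geq 0})\bigr)^{3g-3}$, a product of unions, whereas the target printed in the paper is a union of two products and omits the mixed strata where some but not all $i(\gamma_i,\lambda)$ vanish (the map as printed does not even land in the stated set for such $\lambda$). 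Your version is the right one, and it would be worth saying so explicitly rather than matching the printed target.
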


Using this system of coordinates, we can pull back the $L^{\infty}$ norm from $\mathbb{R}^{6g-6}$ to $\ML$ to get a distance $d_{lam}$.

In this context, we obtain a distance on $\Tgn \times \ML$ by setting $d_{Th} \times d_{lam}((X,\lambda),(X',\lambda')) =\max(d_{Th}(X,X'),d_{lam}(\lambda,\lambda'))$.

Finally, we obtain a distance on $\PMgn$
 with the formula
 \[
d_{\PMgn}([Y,\lambda],[Y',\lambda'])= \underset{h \in Mod(S)}{\inf} d_{Th} \times d_{lam}((Y,\lambda),(h.Y',h.\lambda')).
\]

In the sequel, we will consider the space of bounded Lipschitz functions on $\PMgn$ equipped with the following norm :\[
\|f\|_{Lip} = \|f\|_{L^{\infty}} + \underset{ {
\substack{x,y \in \PMgn, \\
        x \ne y}}}{\sup} \frac{|f(x)-f(y)|}{d_{\PMgn}(x,y)}.
\]

\subsection{The systole function}

\begin{dfnt}
On $\Tgn$, the quantity $\underset{\delta \in \Gamma}{\inf} l_{\delta}(X) := l_{sys}(X)$ is always positive. It is called the systolic length of $X$ and any curve $\gamma$ realising this minimum is called a systole of $X$. (In general, $X$ could have multiple systoles.)

Observe that $l_{sys}(X)=l_{sys}(h.X)$ for every $h \in Mod(S)$. In particular, $l_{sys}$ is also well-defined on $\Mgn$ (and we will use the same notation for both functions on $\Tgn$ and $\Mgn$).
\end{dfnt}

\begin{lem}
  The function $l_{sys}$ is bounded on $\Tgn$ by a constant $K^{sys}_{g}$.
\end{lem}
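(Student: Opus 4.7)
The plan is to prove this bound by a classical area comparison argument. Since points of $\Tgn$ are hyperbolic structures on a closed surface of genus $g \geq 2$, Gauss-Bonnet gives a uniform upper bound on the total area, namely $\mathrm{Area}(X) = 2\pi|\chi(S)| = 4\pi(g-1)$. A systole that is too large would force the injectivity radius to be too large everywhere on $X$, allowing one to embed an isometric hyperbolic disk of area exceeding $4\pi(g-1)$, a contradiction.

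More precisely, I would first establish that at every point $p \in X$, the injectivity radius satisfies $\mathrm{inj}_p(X) \geq l_{sys}(X)/2$. To see this, lift to the universal cover $\pi : \mathbb{H}^2 \to X$ and pick a lift $\tilde{p}$ of $p$. For any non-trivial deck transformation $\gamma \in \pi_1(X)$, the displacement $d_{\mathbb{H}^2}(\tilde{p}, \gamma\tilde{p})$ is at least the translation length of $\gamma$, which is the length of the closed geodesic in the free homotopy class of $\gamma$, and hence at least $l_{sys}(X)$. Taking the infimum over non-trivial $\gamma$ gives the claim. Consequently $\pi$ restricts to an isometric embedding of the hyperbolic ball $B_{\mathbb{H}^2}(\tilde{p}, l_{sys}(X)/2)$ into $X$.

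Next I would use the standard formula for the area of a hyperbolic disk of radius $r$, namely $2\pi(\cosh r - 1)$, to get
\[
2\pi\left(\cosh\!\left(\frac{l_{sys}(X)}{2}\right) - 1 \right) \;\leq\; 4\pi(g-1).
\]
Rearranging yields $\cosh(l_{sys}(X)/2) \leq 2g-1$, so that
\[
l_{sys}(X) \;\leq\; 2 \,\mathrm{arccosh}(2g-1) \;=:\; K^{sys}_g,
\]
which depends only on the topology of $S$. The argument is invariant under $Mod(S)$ (it only uses the underlying hyperbolic metric), so it descends to $\Mgn$ as well.

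There is no real obstacle here; the only subtlety is the lower bound $\mathrm{inj}_p(X) \geq l_{sys}(X)/2$, which in more general Riemannian settings can fail because a short geodesic loop at $p$ need not concatenate to the systole. In our closed hyperbolic setting, however, the universal-cover argument above sidesteps this issue cleanly, since every non-trivial deck transformation is hyperbolic and its minimum displacement coincides with the length of the corresponding closed geodesic.
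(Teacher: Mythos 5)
Your proof is correct and takes essentially the same route as the paper's: both bound the systole by observing that a metric ball of large radius cannot be an embedded disk since its area would exceed the Gauss--Bonnet area $4\pi(g-1)$, with your version simply carried out more carefully (via the injectivity-radius bound $\mathrm{inj}_p \geq l_{sys}(X)/2$ and the disk-area formula) and yielding the explicit constant $K^{sys}_g = 2\,\mathrm{arccosh}(2g-1)$ where the paper leaves the bound implicit. One small implicit step: since the paper defines $l_{sys}$ as an infimum over \emph{simple} closed curves, your inequality $d_{\mathbb{H}^2}(\tilde{p},\gamma\tilde{p}) \geq l_{sys}(X)$ for every non-trivial deck transformation $\gamma$ tacitly uses the standard fact that the shortest closed geodesic on a closed hyperbolic surface is simple, so the infimum over all free homotopy classes agrees with the infimum over $\Gamma$.
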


\begin{proof}
  Any $X \in \Tgn$ has hyperbolic area equal to $-2 \pi \chi(S)$.
   So, if we pick any point $x \in X$, the set $\{y \in X, d(y,x)\leq R\}$ can't be homeomorphic to a disk for a large enough $R$ (otherwise its area will eventually become bigger than the area of the surface).
    This give an upper bound for the systole function which depends only on the topology of the surface.
\end{proof}

We need also to control the regularity of this function.

\begin{lem}
\label{SysLip}
For each $\epsilon >0$, the function $l_{sys}$ is Lipschitz on $\Mgn^{\epsilon}=\{ l_{sys}(X) \geq \epsilon, X \in \Mgn \}$ (and we denote this Lipshitz constant $C^{Lip}_{sys,\epsilon}$).
\end{lem}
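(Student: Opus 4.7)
The plan is to first establish that $\log l_{sys}$ is $1$-Lipschitz for the Thurston distance, and then to convert that into a Lipschitz estimate for $l_{sys}$ itself using the upper bound $K^{sys}_g$ from the previous lemma.

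To get the log-Lipschitz inequality, I would apply the definition of $d_{Th}^{asym}$ to a specific lamination: if $\gamma$ is a systole of $Y'$, then
\[
l_{sys}(Y) \leq l_\gamma(Y) \leq e^{d_{Th}^{asym}(Y,Y')} \, l_\gamma(Y') = e^{d_{Th}^{asym}(Y,Y')} \, l_{sys}(Y').
\]
Swapping the roles of $Y$ and $Y'$ and symmetrizing yields $\bigl|\log l_{sys}(Y) - \log l_{sys}(Y')\bigr| \leq d_{Th}(Y,Y')$, i.e.\ $l_{sys}$ is $1$-Lipschitz in the logarithmic scale.

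To pass from this to an ordinary Lipschitz estimate, I would invoke the elementary bound $|e^a - e^b| \leq \max(e^a, e^b)\, |a - b|$, which gives directly
\[
|l_{sys}(Y) - l_{sys}(Y')| \leq \max(l_{sys}(Y), l_{sys}(Y')) \cdot d_{Th}(Y,Y') \leq K^{sys}_g \cdot d_{Th}(Y,Y').
\]
Descent to $\Mgn$ (and \emph{a fortiori} to the $\epsilon$-thick part $\Mgn^\epsilon$) is then straightforward because $l_{sys}$ is $Mod(S)$-invariant: one replaces $Y'$ by $h \cdot Y'$ for arbitrary $h \in Mod(S)$ and takes the infimum on the right-hand side, matching the quotient distance on the moduli space.

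There is no real analytic obstacle here; the argument essentially unwraps the definition of the Thurston distance and plugs in the previous lemma. One minor observation is that the resulting constant $C^{Lip}_{sys,\epsilon}$ can in fact be taken equal to $K^{sys}_g$ and is therefore independent of $\epsilon$; presumably the authors keep the dependence on $\epsilon$ in the statement for flexibility in later use, or because they will later combine this with a different metric on $\PMgn$ where the thick-part restriction becomes genuinely necessary.
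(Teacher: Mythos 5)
Your proof is correct, but it takes a genuinely cleaner route than the paper's. The paper works directly with ratios: it picks systoles $\gamma$ of $Y$ and $\gamma'$ of $Y'$, splits $\left\vert 1 - \frac{l_{\gamma}(Y)}{l_{\gamma'}(Y')}\right\vert$ by a triangle-type decomposition, and in bounding the cross term $\frac{l_{\gamma}(Y)}{l_{\gamma}(Y')}\left\vert 1 - \frac{l_{\gamma}(Y')}{l_{\gamma'}(Y')}\right\vert$ it divides by $l_{\gamma}(Y') \geq \epsilon$, which is exactly where the restriction to $\Mgn^{\epsilon}$ and the $\epsilon$-dependence of $C^{Lip}_{sys,\epsilon}$ enter; the final constant is then the finite supremum over $d_{Th}(Y,Y')$ of a min of two explicit expressions. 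Your trick of applying the asymmetric inequality with $\gamma$ a systole of the \emph{target} surface $Y'$ (so that $l_{sys}(Y) \leq l_{\gamma}(Y) \leq e^{d_{Th}^{asym}(Y,Y')} l_{sys}(Y')$, and symmetrically) sidesteps the two-systole comparison entirely, yields the log-Lipschitz bound $|\log l_{sys}(Y) - \log l_{sys}(Y')| \leq d_{Th}(Y,Y')$, and the mean value inequality $|e^a - e^b| \leq \max(e^a,e^b)|a-b|$ then gives the global estimate $|l_{sys}(Y)-l_{sys}(Y')| \leq K^{sys}_g \, d_{Th}(Y,Y')$ with no division by small lengths; the descent to the quotient via $Mod(S)$-invariance and the infimum over $h$ is handled exactly as you say (and since $d_{Th}\times d_{lam}$ is a max, the bound also transfers to the bundle metric on $\PMgn$). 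Both arguments ultimately rest on the same bi-Lipschitz control $e^{-d_{Th}(Y,Y')} l_{\delta}(Y) \leq l_{\delta}(Y') \leq e^{d_{Th}(Y,Y')} l_{\delta}(Y)$, so what your version buys is a strictly stronger statement — Lipschitzness on all of $\Mgn$ with the $\epsilon$-independent constant $K^{sys}_g$ — and a shorter proof; the thick-part restriction in the lemma is indeed superfluous for this particular statement (though harmless downstream, where it would let one replace $C^{Lip}_{sys,\epsilon_0}$ by $K^{sys}_g$ in the estimate of $\|g_{\epsilon_0}\|_{Lip}$).
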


\begin{proof}
  Take $[Y],[Y'] \in \Mgn^{\epsilon}$, a systole $\gamma \in \Gamma$ of $Y$, a systole $\gamma' \in \Gamma$ of $Y'$. We have
  $$ e^{-d_{Th}(Y,Y')}l_{\delta}(Y)\leq l_{\delta}(Y') \leq e^{d_{Th}(Y,Y')} l_{\delta}(Y), \forall \delta \in \Gamma.
  $$

  Hence, $$
  \left\vert 1 - \frac{l_{\gamma}(Y)}{l_{\gamma}(Y')} \right\vert \leq \max( 1-e^{-d_{Th}(Y,Y')},e^{d_{Th}(Y,Y')} -1 ).
  $$

  On the other hand,
\begin{equation*}
    \begin{array}{ll}
      |l_{sys}(Y)-l_{sys}(Y')|   & =  l_{sys}(Y')|1-\frac{l_{sys}(Y)}{l_{sys}(Y')}| \\
      & \leq  K^{sys}_{g} \left\vert 1-\frac{l_{\gamma}(Y)}{l_{\gamma'}(Y')} \right\vert \\
      & =  K^{sys}_{g} \left\vert 1-\frac{l_{\gamma}(Y)}{l_{\gamma}(Y')} +\frac{l_{\gamma}(Y)}{l_{\gamma}(Y')} -\frac{l_{\gamma}(Y)}{l_{\gamma'}(Y')} \right\vert \\
      & \leq  K^{sys}_{g} \left\vert 1-\frac{l_{\gamma}(Y)}{l_{\gamma}(Y')} \right\vert + K^{sys}_{g} \left\vert \frac{l_{\gamma}(Y)}{l_{\gamma}(Y')} -\frac{l_{\gamma}(Y)}{l_{\gamma'}(Y')} \right\vert \\
       & \leq  K^{sys}_{g} \left\vert 1-\frac{l_{\gamma}(Y)}{l_{\gamma}(Y')} \right\vert + \frac{K^{sys}_{g} l_{\gamma}(Y)}{l_{\gamma}(Y')} \left\vert 1 -\frac{l_{\gamma}(Y')}{l_{\gamma'}(Y')} \right\vert \\
       & \leq  K^{sys}_{g} \left\vert 1-\frac{l_{\gamma}(Y)}{l_{\gamma}(Y')} \right\vert + \frac{(K^{sys}_{g})^2 }{\epsilon} \left\vert 1 -\frac{l_{\gamma}(Y')}{l_{\gamma'}(Y')} \right\vert.

    \end{array}
\end{equation*}

  Since $
  l_{\gamma'}(Y') \leq l_{\gamma}(Y')
  $ (as $\gamma'$ is a systole of $Y'$) and $$
l_{\gamma}(Y') \leq e^{d_{Th}(Y,Y')} l_{\gamma}(Y) \leq e^{d_{Th}(Y,Y')} l_{\gamma'}(Y) \leq e^{2 d_{Th}(Y,Y')} l_{\gamma'}(Y'),
  $$
  we also have $$
  \left\vert 1 - \frac{l_{\gamma}(Y')}{l_{\gamma'}(Y')} \right\vert \leq e^{2 d_{Th}(Y,Y')}-1.
  $$
    By combining these estimates with the bound $$
    |l_{sys}(Y)-l_{sys}(Y')| \leq 2 K^{sys}_{g},
    $$
    we get
    $$
\frac{|l_{sys}(Y)-l_{sys}(Y')|}{d_{Th}(Y,Y')} \leq \min( \frac{2 K^{sys}_{g}}{d_{Th}(Y,Y')} , K^{sys}_{g} \frac{\max( 1-e^{- d_{Th}(Y,Y')},e^{d_{Th}(Y,Y')} -1 )}{d_{Th}(Y,Y')} + \frac{(K^{sys}_{g})^2}{\epsilon} \frac{e^{2 d_{Th}(Y,Y')}-1}{d_{Th}(Y,Y')}).
    $$
    This completes the proof because the maximum over all $d_{Th}(Y,Y') \in ]0, +\infty[$ of the right hand side is finite and it gives an upper bound of the Lipschitz constant of $l_{sys}(\cdot)_{\vert \Mgn^{\epsilon}}$.
\end{proof}

Next, we recall the definition of the function $B(\cdot)$ indicating how the volume of the unit ball of the space of measured lamination changes in the moduli space.

\begin{dfnt}
  \label{DefB}
  For $X \in \Tgn$ we denote by $B(X) := \mu_{Th}'(\lambda \in \ML,l_{\lambda}(X) \leq 1)$. This function is $Mod(S)$-invariant and therefore well-defined on $\Mgn$. We keep the same notation $B(\cdot)$ for the function induced on $\Mgn$.
\end{dfnt}

\begin{lem}
  The function $B$ has a strictly positive lower bound on $\Mgn$.
\end{lem}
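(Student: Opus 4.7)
The plan is to combine pointwise positivity and continuity of $B$ with a two-region decomposition of $\Mgn$ by the systole. Assume for contradiction that there is a sequence $X_n \in \Mgn$ with $B(X_n) \to 0$. By Mumford compactness, after passing to a subsequence either $X_n$ converges in $\Mgn$ or $l_{sys}(X_n) \to 0$; I would rule out both alternatives.

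For the convergent alternative, I would first establish that $B$ is continuous on $\Mgn$ and pointwise strictly positive. Continuity of $B(X) = \mu_{Th}'(\{l_\lambda(X) \leq 1\})$ follows from the joint continuity of $(\lambda, X) \mapsto l_\lambda(X)$ together with the fact that the boundary set $\{l_\lambda(X) = 1\}$ is $\mu_{Th}'$-negligible. The latter is a consequence of the $(6g-6)$-homogeneity of $\mu_{Th}'$ under scaling of laminations: the push-forward of $\mu_{Th}'$ along the length function is necessarily of the form $c\, r^{6g-7}\, dr$, which has no atoms. Pointwise positivity holds because $\mu_{Th}'$ has full support on $\ML$ --- any non-empty open cone carries positive mass by the counting definition, since multicurve orbits under $Mod(S)$ are dense --- while the unit length ball is open and non-empty. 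A convergent subsequence $X_n \to X_\infty$ would therefore yield $B(X_\infty) = 0$, a contradiction.

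For the degenerating alternative, I would show that $B(X) \to \infty$ as $l_{sys}(X) \to 0$. Pick a systole $\gamma$ of $X$ of length $\ell$ small, and complete $\gamma$ to a Bers pants decomposition $\gamma = \gamma_1, \ldots, \gamma_{3g-3}$ whose other curves have length uniformly bounded by a topological constant. In the Dehn-Thurston coordinates of \cref{DTcoord}, $\mu_{Th}'$ is Lebesgue measure up to a bounded factor. The strategy is to exhibit an explicit $(6g-6)$-dimensional box of laminations of hyperbolic length at most $1$ whose Lebesgue volume diverges as $\ell \to 0$. Concretely I would take coordinates with the weight/twist parameter along $\gamma$ ranging up to $\approx 1/\ell$, the intersection parameter with $\gamma$ bounded by $\approx 1/\log(1/\ell)$ (to respect the collar-lemma cost of crossing $\gamma$), and the remaining coordinates confined to a universal bounded set. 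Using the Bers bound on the transverse pants curves to estimate their contribution, and the collar lemma around $\gamma$ to control the cost of transverse intersections and twisting, each lamination in this box has hyperbolic length at most $1$ on $X$. Its Thurston volume grows without bound as $\ell \to 0$, so $B(X_n) \to \infty$, contradicting $B(X_n) \to 0$.

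The main obstacle is the length estimate in the last step: one must bound $l_\lambda(X)$ uniformly in terms of the Dehn-Thurston coordinates of $\lambda$ when $X$ is deep in the thin part. This is essentially standard hyperbolic geometry (Bers bound plus the collar lemma), but the case analysis over the possible combinatorial positions of the other pants curves relative to the short curve $\gamma$ requires some care, since the length function is not linear in the Dehn-Thurston parameters. As an alternative, one could appeal to Mirzakhani's asymptotic expansion of $B$ in the thin part, which directly gives divergence as the systole shrinks.
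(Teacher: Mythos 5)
Your proposal is correct and rests on the same thick--thin skeleton as the paper's proof, but it is a genuinely more self-contained route: where the paper disposes of the lemma by two citations to \cite{mirzakhani2008growth} (Proposition 3.2 for continuity and non-vanishing of $B$, Proposition 3.6 for the lower bound $B(X)\geq 1$ on the complement of $\Mgn^{\epsilon}$, the compactness of $\Mgn^\epsilon$ being used implicitly), you re-derive each black box. Your ingredients are all sound: the homogeneity argument showing the push-forward of $\mu_{Th}'$ under $\lambda \mapsto l_\lambda(X)$ has density proportional to $r^{6g-7}dr$ (hence no atoms), full support of the Thurston measure for pointwise positivity, and the explicit Dehn--Thurston box in the thin part, whose volume $\asymp \frac{1}{\ell\,|\log \ell|}$ (twist up to $\sim 1/\ell$, intersection up to $\sim 1/|\log\ell|$) exactly reproduces the short-curve factor in Mirzakhani's product formula. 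Two remarks. First, homogeneity gives you continuity even more cheaply than your boundary-negligibility step: if $d = d_{Th}(X,X')$ then $e^{-d} l_\lambda(X) \leq l_\lambda(X') \leq e^{d} l_\lambda(X)$ for all $\lambda$, so $e^{-(6g-6)d} B(X) \leq B(X') \leq e^{(6g-6)d} B(X)$, i.e.\ $\log B$ is $(6g-6)$-Lipschitz for $d_{Th}$ --- no portmanteau argument needed. Second, your thin-part step proves divergence of $B$, which is stronger than the uniform positive lower bound actually required; you correctly identify the one unfinished detail (the uniform length estimate $l_\lambda(X) \leq 1$ across combinatorial configurations in Dehn--Thurston coordinates, which is genuinely fiddly since length is not linear in these parameters) and your proposed fallback --- citing Mirzakhani's thin-part asymptotics --- is precisely what the paper does. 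In short: the paper's proof buys brevity at the cost of two external propositions; yours buys transparency and a quantitatively stronger thin-part statement at the cost of a standard but nontrivial hyperbolic-geometry estimate left as a sketch.
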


\begin{proof}
  First, the function $B$ is continuous  and never vanishes according to Proposition 3.2 of \cite{mirzakhani2008growth}. Secondly, on the complement of $\Mgn^{\epsilon}$, according to Proposition 3.6 of \cite{mirzakhani2008growth} we have \[
   1 \leq  \underset{
   {\substack{\gamma \in \Gamma\\
           l_{\gamma}(X) \leq \epsilon}}
   }{\prod} \frac{1}{l_{\gamma}(X) \log(l_{\gamma}(X))} \leq B(X)
   \]
   for $\epsilon$ small enough. This completes the proof.
\end{proof}

Finally, we need to control the difference of two intersection numbers with a given curve. In this direction, we use a lemma of M. Rees.

\begin{lem}[cf \cite{reesalternative}, Lemma 1.10]
\label{LipIntSim}
  For a simple closed curve $\gamma$ and two measured laminations $\lambda$ and $\lambda'$ we have
  $$
  |i(\lambda,\gamma) - i(\lambda',\gamma)| \leq C^{Lip}_{int,\gamma} d_{lam}(\lambda,\lambda')
  $$
\end{lem}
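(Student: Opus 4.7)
The plan is to leverage the Dehn-Thurston coordinates from Theorem~\ref{DTcoord} to express $i(\cdot,\gamma)$ as a continuous piecewise linear function of the parameters, and then to read off the Lipschitz bound from this structure. Since $d_{lam}$ is by definition the $L^{\infty}$ norm on $\mathbb{R}^{6g-6}$ pulled back via these coordinates, any bound on the slopes of $i(\cdot,\gamma)$ as a function of the coordinates will immediately yield the claim, with $C^{Lip}_{int,\gamma}$ depending only on $\gamma$ and the fixed pants decomposition.

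First I would cut $S$ along the pants curves $\gamma_1,\dots,\gamma_{3g-3}$ and decompose $\gamma$ into the finite collection of embedded arcs into which it is broken by the waist curves; the combinatorial type of this decomposition depends only on $\gamma$. Next, inside each pair of pants the $\lambda$-weight gathered by an arc joining two prescribed boundary components is a linear function of the three boundary intersection numbers $i(\gamma_{k},\lambda)$, because the restriction of $\lambda$ to a pair of pants is a non-negative combination of finitely many standard arc types whose weights are themselves linear in those boundary coordinates. Finally, each time $\gamma$ crosses a waist curve $\gamma_{k}$ the shearing contribution is determined by $i(\gamma_{k},\lambda)$ and $t_{\gamma_{k}}(\lambda)$, up to the choice of an integer branch selected by the sign of finitely many linear combinations of these two coordinates.

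Assembling these local contributions produces a global formula for $i(\lambda,\gamma)$ that is piecewise linear in Dehn-Thurston coordinates, with finitely many polyhedral pieces, each linear with integer coefficients bounded in terms of the combinatorics of $\gamma$ relative to the pants decomposition. Since the resulting function is continuous and piecewise linear with uniformly bounded slopes, it is globally Lipschitz with respect to the $L^{\infty}$ norm, and one may take $C^{Lip}_{int,\gamma}$ to be the maximum, over all pieces, of the $L^{1}$ norm of the gradient. The main technical obstacle is the bookkeeping required to enumerate the polyhedral pieces and identify the correct integer branches for the twist terms; however, for the lemma only the existence of such a finite piecewise linear description is needed, and this structure is classical (cf.\ \cite{penner1992combinatorics}), so no explicit formulas are actually required.
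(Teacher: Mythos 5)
The paper does not actually prove this lemma: it is imported verbatim from Rees \cite{reesalternative} (Lemma 1.10), so there is no internal proof to compare against, and your sketch should be judged on its own. Its skeleton is sound and is indeed the standard coordinate argument underlying such statements: in the Dehn--Thurston coordinates of Theorem \ref{DTcoord}, the function $\lambda \mapsto i(\lambda,\gamma)$ is continuous, homogeneous of degree one, and piecewise integral linear with finitely many conical polyhedral pieces depending only on $\gamma$ and the fixed pants decomposition; since $d_{lam}$ is the pulled-back $L^{\infty}$ distance, the Lipschitz constant is the maximal $L^{1}$-norm of the gradients over the pieces, exactly as you say. Two points deserve more care than you give them. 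First, inside a pair of pants the weights of the standard arc types are only \emph{piecewise} linear in the three boundary intersection numbers, via formulas of the type $\max\left(\tfrac{m_i+m_j-m_k}{2},0\right)$, not linear as you assert; this is harmless because piecewise linearity is all your argument uses, but the claim as written is incorrect. Second, the finiteness of the polyhedral pieces, equivalently that only boundedly many integer branches for the twist contributions occur, is the real content: it holds because $\gamma$ crosses each pants curve $\gamma_k$ a fixed finite number of times, so the number of branch choices is bounded in terms of $i(\gamma,\gamma_k)$, and one must also verify continuity across the faces $i(\gamma_k,\lambda)=0$, where the twist coordinate changes meaning under the conventions of Theorem \ref{DTcoord}. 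You defer this bookkeeping to \cite{penner1992combinatorics}, which is legitimate (and no less rigorous than the paper's outright citation of Rees); note also that homogeneity of both $i(\cdot,\gamma)$ and the coordinates lets you deduce the global bound from the bound on the unit sphere of coordinates, simplifying the step from finitely many pieces to a global Lipschitz constant. Compared with the paper, your route buys an explicit constant $C^{Lip}_{int,\gamma}$ controlled by the intersection pattern of $\gamma$ with the pants decomposition, at the cost of combinatorial bookkeeping that Rees carries out once and for all.
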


\section{Rate of mixing of the earthquake flow}
\label{RoM}

Given $\rho$ a positive real number, $\lambda \in \mathcal{ML}$, $X \in \Tgn$  and $\gamma \in \Gamma$, let
$$
J_{\gamma}^{\lambda,X}(\rho) := \{t \in \mathbb{R} , l_{\gamma}(\hat{E}_t(X,\lambda)) \leq \rho \}$$

and

 $$\epsilon_{\gamma}^{\lambda,X} = \underset{t \in \mathbb{R}}{\min} \  l_{\gamma}(\hat{E}_t(X,\lambda)).$$

We will use two results due to S. Kerckhoff \cite{NielsenRealizationPro} and Y. Minsky and B. Weiss \cite{articleMetW}. The first one bounds the variation of the length of a multicurve during the earthquake flow in terms of the intersection number between the multicurve and the lamination directing the earthquake.

\begin{lem}[cf. \cite{NielsenRealizationPro} Corollary 3.4] Let $(X, \lambda) \in \PTgn$ and $\gamma \in \Gamma$, the variation of the length of the curve is bounded by:
$$ \left\vert \frac{d}{dt}l_{\gamma}(\hat{E}^t(X,\lambda)) \right\vert <  i(\lambda,\gamma) .
$$
\end{lem}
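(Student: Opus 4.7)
The plan is to reduce the lemma to Wolpert's cosine formula for twist deformations, apply it first to a single simple closed curve, extend by linearity to weighted multicurves, and then pass to the limit for arbitrary measured laminations.

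First I would treat the case $\lambda = \alpha$, a single simple closed curve of unit weight. Here $\hat{E}^t(X,\alpha)$ is just the Fenchel–Nielsen twist deformation of $X$ along $\alpha$, and Wolpert's formula gives
$$
\frac{d}{dt}l_{\gamma}(\hat{E}^t(X,\alpha)) \;=\; \sum_{p\in \alpha\cap\gamma_t} \cos\theta_p(t),
$$
where the sum is over the intersection points of the hyperbolic geodesic representatives and $\theta_p(t)$ is the (interior) angle at $p$. Because both curves are realised by hyperbolic geodesics they meet transversely, so $\theta_p(t)\in(0,\pi)$ and $|\cos\theta_p(t)|<1$ at every $p$. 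The number of summands is exactly $i(\alpha,\gamma)$, so $|dl_\gamma/dt|<i(\alpha,\gamma)$.

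Next I would extend to $\lambda=\sum_i c_i\alpha_i\in\ML(\mathbb{Z})$. Since the $\alpha_i$ are pairwise disjoint, the individual twist flows commute, and the chain rule gives
$$
\frac{d}{dt}l_\gamma(\hat{E}^t(X,\lambda)) \;=\; \sum_{i} c_i \sum_{p\in\alpha_i\cap\gamma_t}\cos\theta_p(t),
$$
whose absolute value is strictly less than $\sum_i c_i\, i(\alpha_i,\gamma) = i(\lambda,\gamma)$ as soon as at least one $c_i>0$. For an arbitrary $\lambda\in\ML$, I would approximate by weighted multicurves $\lambda_n\to \lambda$. Both sides of the bound depend continuously on $\lambda$ — the intersection number $i(\cdot,\gamma)$ is continuous on $\ML$, and Kerckhoff's integrated cosine formula expresses $\frac{d}{dt}l_\gamma(\hat{E}^t(X,\lambda))$ as the integral of $\cos\theta$ against the transverse measure of $\lambda$ along $\gamma_t$, which depends continuously on $\lambda$. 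Passing to the limit and bounding $|\cos\theta|<1$ pointwise then gives the desired strict inequality (and both sides vanish trivially when $i(\lambda,\gamma)=0$).

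The main obstacle is precisely the last step: in the limit from multicurves to laminations the strictness in the cosine bound might degrade. The cleanest way around this is to use Kerckhoff's integral version of the derivative formula rather than the discrete sum, and observe that a measured lamination cannot be tangent to a closed geodesic on a positive-measure set of intersection points, so the integrand $|\cos\theta|$ is strictly less than $1$ almost everywhere with respect to the transverse measure, giving the strict bound.
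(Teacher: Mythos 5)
The paper offers no proof of this lemma: it is quoted directly from Kerckhoff \cite{NielsenRealizationPro} (Corollary 3.4), and your argument is essentially Kerckhoff's original one — the cosine formula for twists, linearity over weighted multicurves, and the integrated cosine formula $\frac{d}{dt}l_{\gamma} = \int_{\gamma\cap\lambda}\cos\theta\,d\lambda$ for general laminations — so you have in effect reconstructed the cited proof correctly, including the right fix for the only delicate point (strictness could degrade under the multicurve approximation, so one must work with the integral formula directly). Two small refinements: at an actual intersection point a leaf of $\lambda$ and the geodesic $\gamma$ are distinct geodesics and hence meet transversally, so $|\cos\theta|<1$ holds pointwise on the whole support of the transverse measure, not merely almost everywhere; and the strict inequality as stated can only hold when $i(\lambda,\gamma)>0$ (when $i(\lambda,\gamma)=0$ both sides vanish and equality holds, as you noted), which is the regime in which the paper applies the lemma anyway.
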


The second one frames the length of a multicurve during the earthquake between two parallel lines.

\begin{lem}[cf. \cite{articleMetW}, Lemma 5.2]
  \label{LemFra}

There are constants $\rho \leq K^{sys}_g$ and $C_{lem \ref{LemFra}}$, depending only on $S$, such that for any $(X, \lambda) \in \PTgn$,$\gamma \in \Gamma$
and all $t\in J_{\gamma}^{\lambda}(\rho)$, \[
i(\lambda,\gamma) |t-t_\gamma |-C_{lem \ref{LemFra}} \epsilon_{\gamma}^{\lambda,X} \leq l_{\gamma}(\hat{E}^t(X,\lambda)) \leq i(\lambda,\gamma) |t- t_\gamma | + \epsilon_{\gamma}^{\lambda,X}
\]

\end{lem}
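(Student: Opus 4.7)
The upper bound is immediate from the preceding lemma. Since $|\frac{d}{ds} l_\gamma(\hat{E}^s(X,\lambda))| < i(\lambda,\gamma)$ for every $s$ and $l_\gamma(\hat{E}^{t_\gamma}(X,\lambda)) = \epsilon_\gamma^{\lambda,X}$ by definition of $t_\gamma$, integrating from $t_\gamma$ to $t$ gives $l_\gamma(\hat{E}^t(X,\lambda)) \leq \epsilon_\gamma^{\lambda,X} + i(\lambda,\gamma)|t - t_\gamma|$.

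The substantive content is the lower bound. My plan is first to fix $\rho > 0$ small enough, depending only on the topology of $S$, so that whenever $l_\gamma(Y) \leq \rho$ the standard collar $C_\gamma(Y) \subset Y$ is embedded and has half-width comparable to $\log(1/l_\gamma(Y))$. By the collar lemma, any geodesic leaf of a measured lamination crossing $\gamma$ inside $C_\gamma(Y)$ must do so at an angle within $\eta(\rho)$ of $\pi/2$, where $\eta(\rho) \to 0$ as $\rho \to 0$. Since both sides of the claimed inequality depend continuously on $\lambda \in \mathcal{ML}$, density of weighted multicurves reduces the proof to the case $\lambda = \sum_j a_j \delta_j$.

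For such a multicurve I would pass to the universal cover: lift $\gamma$ on the deformed surface $\hat{E}^t(X,\lambda)$ to the axis $\tilde{\gamma}$ of a hyperbolic isometry whose translation length is $l_\gamma(\hat{E}^t(X,\lambda))$, and follow a fundamental segment of $\tilde{\gamma}$ under the earthquake. Kerckhoff's first-variation formula $\frac{d}{ds} l_\gamma(\hat{E}^s(X,\lambda)) = \sum_{x \in \gamma \cap \lambda} \cos\theta_x(s)$ expresses the instantaneous length change as a sum of cosines of the crossing angles; for $s \in J_\gamma^\lambda(\rho)$, the collar estimate above forces each cosine to be close to $\pm 1$. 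Integrating then shows that the earthquake produces a net tangential displacement along $\tilde{\gamma}$ of magnitude $(t - t_\gamma)\, i(\lambda,\gamma)$, up to an error of order $\epsilon_\gamma^{\lambda,X}$, which in turn bounds the translation length $l_\gamma(\hat{E}^t(X,\lambda))$ from below.

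The main obstacle is the last step: the cosines $\cos\theta_x$ can have either sign, and one must rule out that cancellations drastically reduce the net tangential displacement. The hypothesis $t \in J_\gamma^\lambda(\rho)$ is designed precisely for this: it guarantees that throughout the interval with endpoints $t_\gamma$ and $t$ one stays in the narrow-collar regime, so the sign pattern of the crossings is essentially locked and $\frac{d}{ds} l_\gamma$ has constant sign equal to $\mathrm{sgn}(s - t_\gamma)\cdot i(\lambda,\gamma)$ up to a small multiplicative error. The minimum property at $t_\gamma$, combined with the finite collar width, then absorbs all residual corrections into a single constant multiple of $\epsilon_\gamma^{\lambda,X}$ depending only on $S$, yielding the claimed lower bound.
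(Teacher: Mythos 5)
First, a remark on the comparison itself: the paper gives no proof of this statement --- it is imported (``cf.\ Lemma 5.2'') from Minsky--Weiss \cite{articleMetW}, so the benchmark is their argument, not anything in this text. Your treatment of the upper bound is correct and is the standard one: since $\left\vert \frac{d}{ds} l_{\gamma}(\hat{E}^s(X,\lambda)) \right\vert < i(\lambda,\gamma)$ by Kerckhoff's derivative bound and the minimum $\epsilon_{\gamma}^{\lambda,X}$ is attained at $t_\gamma$, integrating from $t_\gamma$ gives the upper inequality, in fact for all $t \in \mathbb{R}$ and not only $t \in J_{\gamma}^{\lambda,X}(\rho)$.

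The lower bound, which is the entire content of the lemma, is where your sketch has genuine gaps. First, there is an internal contradiction: you invoke the collar lemma to claim that leaves of $\lambda$ cross $\gamma$ at angles within $\eta(\rho)$ of $\pi/2$, and in the next step you claim this ``forces each cosine to be close to $\pm 1$.'' An angle near $\pi/2$ gives cosine near $0$, not $\pm 1$. Moreover the near-perpendicularity claim is itself false in the regime you need: a simple geodesic can cross a very short geodesic at an arbitrarily shallow angle by travelling a long way inside the collar, and this is exactly what happens at times $t \in J_{\gamma}^{\lambda,X}(\rho)$ far from $t_\gamma$, where the accumulated shear makes the crossings shallow with $\cos\theta_x \approx \mathrm{sgn}(t-t_\gamma)$, consistently with the asymptotic slope $\pm i(\lambda,\gamma)$; near $t_\gamma$, by contrast, the cosines average to zero. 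So the geometry is the opposite of what you assert. Second, the decisive point --- that the sign pattern is ``locked'' so that no cancellation occurs in $\sum_x \cos\theta_x(s)$ --- is precisely what must be proven, and you only assert it. Third, even granting a sign-locked slope of size $(1-\delta(\rho))\, i(\lambda,\gamma)$, integration yields only a multiplicative-error lower bound $(1-\delta)\, i(\lambda,\gamma)\vert t - t_\gamma\vert$, whereas the lemma asserts the additive error $-C_{lem \ref{LemFra}}\, \epsilon_{\gamma}^{\lambda,X}$. Since $\vert t - t_\gamma \vert$ can be of order $\rho / i(\lambda,\gamma)$ while $\epsilon_{\gamma}^{\lambda,X}$ can be arbitrarily small, the additive form is strictly stronger: one must show the slope deficit $i(\lambda,\gamma) - \vert \frac{d}{ds} l_\gamma \vert$ decays fast enough (roughly exponentially in the depth inside the collar) that its integral over $J_{\gamma}^{\lambda,X}(\rho)$ is $O(\epsilon_{\gamma}^{\lambda,X})$ with a constant depending only on $S$. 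Your sentence about the minimum property ``absorbing all residual corrections'' hides exactly this quantitative collar estimate, which is the actual work done by Minsky and Weiss and is not recoverable from your outline as written.
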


\begin{figure}[h]
  \hspace*{2cm}
\includegraphics[scale=0.4]{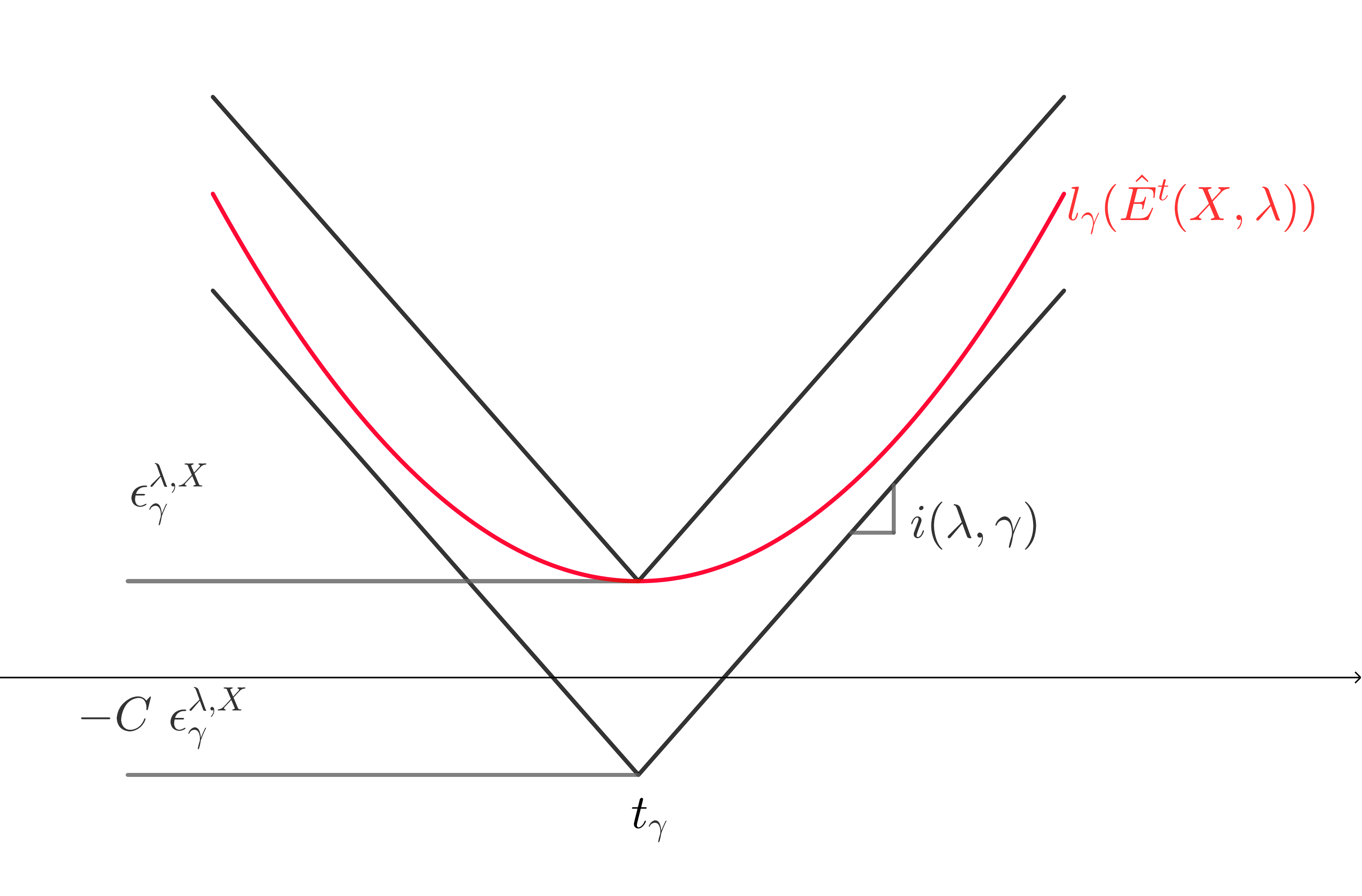}
\caption{The length of a curve is controled along the earthquake flow during a small time near its minimum.}
\centering
\end{figure}

We now fix $\epsilon_0$ such that $ \epsilon_0(2+C_{lem \ref{LemFra}}) <\frac{\rho}{2}  $, and we consider a parameter $\mu > 0$.
Since we will keep $\epsilon_0$ fixed and will decrease $\mu$ in what follows, we will often omit the dependence on $\epsilon_0$ of several constants in most estimates below.

 Let $$ \begin{array}{l}
 \Omega_{\epsilon_0,\mu}=\{ [X,\lambda] \in \PMgn ,l_{sys} (X) \in  [\frac{\epsilon_0}{2};\epsilon_0]
 \text{ and } i(\lambda,\gamma) \in [0,\mu] \text{ when } l_{\gamma}(X) \in [\frac{\epsilon_0}{2};\epsilon_0] \}
\end{array}
$$

 Moreover, let $D$ be the set $\{ [X,\lambda] \in \mathcal{PM}^1,l_{sys}(X) > \rho / 2 \} $. Obviously this set is non-empty (because $\rho < K_{sys}^g$) and has a positive $\nu$-measure.

Given $[X,\lambda] \in \Omega_{\epsilon_0,\mu}$ and $\gamma \in \Gamma$ such that $\tilde{\epsilon} := l_{\gamma}(X) \in [\frac{\epsilon_0}{2};\epsilon_0]$, if $\epsilon_{\gamma}^{\lambda,X}=\epsilon_\gamma$ is attained along the earthquake flow orbit $(X,\lambda)$ at time $t_{\gamma}$,
then the inequalities in Lemma \ref{LemFra} for $t=0$ yield
$$
\tilde{\epsilon}-\epsilon_\gamma \leq i(\lambda,\gamma) |t_\gamma | \leq \tilde{\epsilon} + C_{lem \ref{LemFra}} \epsilon_\gamma.
$$
Because $0 < \epsilon_\gamma \leq \tilde{\epsilon}$ and $[X,\lambda] \in \Omega_{\epsilon_0,\mu}$, we have $i(\lambda,\gamma) |t_\gamma| \in [0; \tilde{\epsilon}(1+C_{lem \ref{LemFra}})]$. By combining these estimates with Lemma \ref{LemFra}, it follows that, for any $t \in J_{\gamma}^{\lambda,X}(\rho)$,
one has
$$
l_\gamma (\hat{E}_t (X,\lambda)) \leq \tilde{\epsilon} (2+C_{lem \ref{LemFra}}) +i(\lambda,\gamma) |t|
 \leq \epsilon_0 (2+C_{lem \ref{LemFra}}) + \mu |t|
 \leq \frac{\rho}{2} + \mu |t|.
$$

Hence, for all $|t| \leq \frac{\rho}{2 \mu} := t_{lim}$, we have that $l_\gamma(\hat{E}_t(X,\lambda)) \leq \rho$. In particular, $E_t([X,\lambda]) \notin D$ for all $|t| \leq t_{lim}$.

Therefore, if $f_{\epsilon_0,\mu}$ is a Lipschitz positive function with support included in $\Omega_{\epsilon_0,\mu}$
and $\int f_{\epsilon_0,\mu} > K_0 \nu(\Omega_{\epsilon_0,\mu})$, and $g$ is a positive Lipschitz function with support in $D$, $K_1= \int g$ and $K_2 = \| g \|_{Lip}$, then
$$
\left\vert \int f_{\epsilon_0,\mu} \circ E_{t_{lim}} g d \nu - \int f_{\epsilon_0,\mu} d \nu \int g d \nu\right\vert = K_1 \int f_{\epsilon_0,\mu} > K_0 K_1 \nu(\Omega_{\epsilon_0,\mu}).
$$

\newpage

Thus, if the earthquake flow were polynomially mixing with degree $d$ in the sense of the inequality \ref{eq2}, then:

\begin{equation}
  \label{equGe}
  \nu(\Omega_{\epsilon_0,\mu})K_1 K_0 \leq K_1 \int f_{\epsilon_0,\mu} d \nu
  < K_2 \| f_{\epsilon_0,\mu} \|_{Lip} \frac{C}{t_{lim}^d}.
\end{equation}

In the remainder of this section, we will contradict this estimate (for $d> 6g-5$) by constructing $f_{\epsilon_0,\mu}$ with a small Lipschitz norm relatively to the volume ($\nu$-mass) of $\Omega_{\epsilon_0,\mu}$.

\subsection{Estimate of  \texorpdfstring{ $\| f_{\epsilon_0,\mu} \|_{Lip}$ }{the Lipschitz norm} }

We will define $\hat{f}_{\epsilon_0,\mu}$ on $\PTgn$ as a product of piecewise linear function as follows: $
\hat{f}_{\epsilon_0,\mu}=g_{\epsilon_0}j_{\mu}
$
with

$$
g_{\epsilon_0}(X) = \left \{
\begin{array}{lll}
0 & \text{if} & l_{sys}(X) \notin [\epsilon_0/2;\epsilon_0]\\
1 & \text{if} & l_{sys}(X) \in [\frac{4 \epsilon_0}{6};\frac{5 \epsilon_0}{6}]\\
\frac{6}{\epsilon_0}(l_{sys}(X)-\frac{\epsilon_0}{2}) & \text{if} & l_{sys} \in [\frac{\epsilon_0}{2};\frac{4 \epsilon_0}{6}]\\
\frac{-6}{\epsilon_0}(l_{sys}(X)-\epsilon_0) & \text{if} & l_{sys} \in [\frac{5 \epsilon_0}{6};\epsilon_0]
\end{array}
\right .
$$
and
$$
j_{\mu}(X,\lambda) = \prod_{\gamma \in \Gamma, \frac{\epsilon_0}{2} < l_{\gamma}(X) \leq \epsilon_0} j_{\mu,\gamma}(\lambda)
$$
where
$$
j_{\mu,\gamma}(\lambda) = \left \{
\begin{array}{lll}
0 & \text{if} & i(\lambda,\gamma) > \mu\\
1 & \text{if} & i(\lambda,\gamma) \leq \mu/2\\
\frac{2 }{ \mu}(\mu-i(\lambda,\gamma)) & \text{if} & i(\lambda,\gamma) \in [\mu /2 ; \mu].
\end{array}
\right .
$$

We will show that :

\begin{lem}
  $\hfem$ is Lipschitz and, for $\mu$ small enough,
  $$
  \begin{array}{lll}
  \| \hfem \|_{Lip} & \leq & \frac{C^{Lip}_{f}}{ \mu}\\
  \end{array}.
  $$
\end{lem}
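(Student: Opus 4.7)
The plan is to estimate the Lipschitz norm of $\hfem = g_{\epsilon_0}\,j_\mu$ by bounding each factor separately and combining via the Leibniz rule $\|uv\|_{Lip}\leq\|u\|_{L^{\infty}}\|v\|_{Lip}+\|v\|_{L^{\infty}}\|u\|_{Lip}$. Since both factors take values in $[0,1]$, the $L^{\infty}$ contribution is trivial: $\|\hfem\|_{L^{\infty}}\leq 1$, independently of $\mu$. For $g_{\epsilon_0}$, I would note that it is a piecewise linear function of $l_{sys}(X)$ with maximal slope $6/\epsilon_0$, supported on the thick region $\{l_{sys}\geq\epsilon_0/2\}$; applying Lemma \ref{SysLip} with parameter $\epsilon_0/2$ yields $\|g_{\epsilon_0}\|_{Lip}\leq 1+(6/\epsilon_0)\,C^{Lip}_{sys,\epsilon_0/2}$, a quantity independent of $\mu$.

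The $\mu$-dependent term comes from $j_\mu$, which I would handle as follows. First, by choosing $\epsilon_0$ smaller than the Margulis constant, any curves of length at most $\epsilon_0$ are simple and pairwise disjoint, so the product defining $j_\mu(X,\lambda)$ has at most $3g-3$ nontrivial factors. Second, each factor $j_{\mu,\gamma}$ is a piecewise linear function of $i(\lambda,\gamma)$ with maximal slope $2/\mu$; combined with Lemma \ref{LipIntSim}, this makes $\lambda\mapsto j_{\mu,\gamma}(\lambda)$ Lipschitz with constant $(2/\mu)\,C^{Lip}_{int,\gamma}$. Finally, the telescoping inequality $\left|\prod_i a_i-\prod_i b_i\right|\leq\sum_i|a_i-b_i|$ valid for $a_i,b_i\in[0,1]$ gives a $\lambda$-Lipschitz bound for $j_\mu$ of order $(3g-3)\cdot(2/\mu)\max_\gamma C^{Lip}_{int,\gamma}=O(1/\mu)$.

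The subtle point, which I expect to be the main obstacle, is the dependence of $j_\mu$ on $X$: the set $\{\gamma:\epsilon_0/2<l_\gamma(X)\leq\epsilon_0\}$ over which the product is taken can itself change as $X$ varies. At the lower threshold $l_\gamma(X)=\epsilon_0/2$, the inequality $l_{sys}(X)\leq\epsilon_0/2$ forces $g_{\epsilon_0}(X)=0$, so any discontinuity is annihilated by the $g_{\epsilon_0}$ factor. At the upper threshold $l_\gamma(X)=\epsilon_0$, I would smooth the sharp indicator by a $O(1/\epsilon_0)$-Lipschitz cutoff $\phi_\gamma$ equal to $1$ on $[\epsilon_0/2+\delta,\epsilon_0-\delta]$ and vanishing outside $(\epsilon_0/2,\epsilon_0]$, replacing the factor $j_{\mu,\gamma}(\lambda)$ by $\phi_\gamma(l_\gamma(X))\,j_{\mu,\gamma}(\lambda)+(1-\phi_\gamma(l_\gamma(X)))$, which interpolates smoothly between presence and absence from the product. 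This adds an $X$-Lipschitz contribution of order $(3g-3)\cdot O(1/\epsilon_0)$, independent of $\mu$ and absorbable into $C^{Lip}_f$. Putting everything together via the Leibniz rule produces the announced estimate $\|\hfem\|_{Lip}\leq C^{Lip}_f/\mu$ for $\mu$ sufficiently small, the $1/\mu$ coming exclusively from the slopes of the factors $j_{\mu,\gamma}$.
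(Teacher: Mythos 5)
Your core estimate follows the paper's decomposition almost exactly: $\|\hfem\|_{L^\infty}\leq 1$, a Leibniz splitting of $|g_{\epsilon_0}j_\mu - g_{\epsilon_0}'j_\mu'|$, Lemma \ref{SysLip} for the $g_{\epsilon_0}$ factor (your parameter $\epsilon_0/2$ is in fact the correct one, since the support of $g_{\epsilon_0}$ reaches down to $l_{sys}=\epsilon_0/2$), and Lemma \ref{LipIntSim} combined with the slope $2/\mu$ for each $j_{\mu,\gamma}$, telescoped over the factors. However, your final bound $(3g-3)\cdot(2/\mu)\max_\gamma C^{Lip}_{int,\gamma}$ has a genuine gap: the maximum runs over \emph{all} simple closed curves that can be $\epsilon_0$-short at some point of Teichm\"uller space, an infinite family, and Rees's constant $C^{Lip}_{int,\gamma}$ depends on $\gamma$ through its Dehn--Thurston coordinates with respect to the fixed pants decomposition, so there is no a priori uniform bound. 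The paper closes exactly this hole by invoking the $Mod(S)$-invariance of $\hfem$ and of the quotient distance to restrict to a fundamental domain $\Delta$ intersected with $l_{sys}^{-1}([\epsilon_0/2,\epsilon_0])$, which is compact (Mumford), so that only finitely many curves occur and $C^{Lip}_{int}=\max_{X\in\Delta}\max_{l_\gamma(X)\leq\epsilon_0}C^{Lip}_{int,\gamma}$ is finite. Your Margulis-constant observation correctly bounds the \emph{number} of nontrivial factors by $3g-3$ (arguably more cleanly than the paper, which only notes finiteness for each $X$), but it does nothing to bound the constants; you still need the fundamental-domain reduction.

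On the change of the factor set: you have put your finger on a real weak point. The paper's three cases are keyed solely to the systole's distance $d_X$ to the thresholds and do not cover a non-systole curve $\gamma$ with $l_\gamma(X)=\epsilon_0$ and $i(\lambda,\gamma)>\mu$, where $\hfem$ as literally defined jumps (your lower-threshold argument, that $g_{\epsilon_0}$ kills any jump at $l_\gamma=\epsilon_0/2$, is correct and is implicitly the paper's third case). But your mollification proves the lemma for a \emph{different} function, and as written it destroys the property this lemma exists to serve: on the band where $\phi_\gamma<1$, the modified factor $\phi_\gamma(l_\gamma(X))j_{\mu,\gamma}(\lambda)+(1-\phi_\gamma(l_\gamma(X)))$ is positive even when $i(\lambda,\gamma)>\mu$, so the support of the modified function is no longer contained in $\Omega_{\epsilon_0,\mu}$. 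That containment is precisely what makes $\int \fem\circ E_{t_{lim}}\, g\, d\nu$ vanish in the derivation of \eqref{equGe}: if the systole itself has length in $(\epsilon_0-\delta,\epsilon_0)$, where $g_{\epsilon_0}>0$ but $\phi<1$, nothing forces any short curve to satisfy $i(\lambda,\cdot)\leq\mu$, and the Minsky--Weiss exclusion from $D$ breaks down. The fix is easy but must be stated: shrink the support of $g_{\epsilon_0}$ to $l_{sys}\in[\epsilon_0/2+\delta,\epsilon_0-\delta]$ so that wherever $g_{\epsilon_0}>0$ the systole's cutoff equals $1$; then on the support the systole factor is exactly $j_{\mu,\gamma_{sys}}$, which enforces $i(\lambda,\gamma_{sys})\leq\mu$ --- the only input the non-divergence argument actually uses --- while the volume lower bound of the next subsection is unaffected. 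Without that adjustment (and the fundamental-domain step above), your proposal does not yet yield a function usable in the proof of Theorem \ref{Thprincipal}.
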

\begin{proof}

As $\| \hat{f}_{\epsilon_0,\mu}\|_{L^{\infty}} \leq 1$, our task is reduced to bound
$$
\underset{\substack{(X,\lambda) \in \PTgn, \\
(X_n,\lambda_n) \in \PTgn \setminus \{(X,\lambda)\}, \\
(X_n,\lambda_n) \to (X,\lambda)
}}{\sup} \,
\lim_{n \to \infty} \frac{|\hat{f}_{\epsilon_0,\mu}(X,\lambda)-\hat{f}_{\epsilon_0,\mu}(X_n,\lambda_n)|}{d_{Th}\times d_{lam}((X,\lambda),(X_n,\lambda_n))}.
$$
 In order to avoid difficulties which can occur when the curves in the product defining $j_{\mu}$ are not the same for $X$ and $X_n$, we divide our analysis into three cases.

If $(X,\lambda) \not\in supp(\hfem)$ then eventually $(X_n,\lambda_n) \not\in supp(\hfem)$ and we are done.

If $(X,\lambda) \in supp(\hfem)$ let $d_X = min(l_{\gamma}(X)- \frac{\epsilon_0}{2}, \epsilon_0 - l_{\gamma}(X))$ where $\gamma$ a curve is a systole.

If $d_X > 0$ eventually $l_{\gamma}(X_n) \in [\frac{\epsilon_0}{2}, \epsilon_0]$ for all $n$ large enough.
In this case we have
$$
|\hfem(X,\lambda)-\hfem(X_n,\lambda_n)| \leq |g_{\epsilon_0}(X) -g_{\epsilon_0}(X_n) | \|j_{\mu}\|_{L^{\infty}} + | j_{\mu}(X,\lambda)-j_{\mu}(X_n,\lambda_n)|\|g_{\epsilon_0}\|_{L^{\infty}}.
$$
Since $\|g_{\epsilon_0}\|_{L^{\infty}}=\|j_{\mu}\|_{L^{\infty}}=1$, Lemma \ref{SysLip} ensures $\|g_{\epsilon_0}\|_{Lip} = \frac{6}{\epsilon_0} C^{Lip}_{sys,\epsilon_0}+1$.
On the other hand, Lemma \ref{LipIntSim} guarantees that
$$
|j_{\mu}(X,\lambda)-j_{\mu}(X_n,\lambda_n)| \leq \frac{2}{\mu} |i(\lambda,\gamma)-i(\lambda_n,\gamma)| \leq \frac{2 C^{Lip}_{int,\gamma} }{\mu}d_{lam}(\lambda,\lambda_n).
$$
Because $X \in \Tgn$ with $l_{sys}(X) \in [\epsilon_0 /2 , \epsilon_0]$ has a finite number of curve of lenght less than $\epsilon_0$, we can define
$$
C^{Lip}_{int,X} = \underset{l_{\gamma}(X) \leq \epsilon_0}{\max}C^{Lip}_{int,\gamma}.
$$
Therefore, if $\Delta$ is the intersection of a fundamental domain of the $Mod(S)$-action on $\Tgn$ intersect with $l_{sys}^{-1}([\epsilon_0 /2 , \epsilon_0])$, then we can use the compactness of $\Delta$ to define
$$
C^{Lip}_{int} = \underset{X \in \Delta}{\max} \ C^{Lip}_{int,X}
$$
 and to conclude this second case.

Finally, in the third case when $d_X=0$, one has $\hfem(X,\lambda)=g_{\epsilon_0}(X)=0$.
 Since $\hfem(X_n,\lambda_n) \leq g_{\epsilon_0}(X_n) \leq \|g_{\epsilon_0} \|_{Lip} d_{Th}(X,X_n)$, we are also done in this case.

By taking these three cases into account, we see that there is a constant $C^{Lip}_{f}$ such that
$$
\begin{array}{lll}
\| \hfem \|_{Lip} & \leq & \frac{C^{Lip}_{f}}{ \mu}\\
\end{array}
$$
for $\mu$ small enough.
\end{proof}
The function $\hfem$ is $Mod(S)$-invariant and therefore it naturally descends to a function $\fem$ on $\PMgn$ with the same Lipschitz norm.

\subsection{Estimate of  the volume of \texorpdfstring{$\Omega_{\epsilon_0,\mu}$}{the considered places} }

We will now estimate the volume of $\Omega_{\epsilon_0,\mu}$.

If $\gamma_1$ and $\gamma_2$ are simple closed curves the collar lemma states (see Corollary 3.4 of \cite{martelli2016introduction}) that:
 $$
 i(\gamma_1,\gamma_2) \leq  \frac{l_{\gamma_1}(X)}{R(l_{\gamma_2}(X))}
 $$
  where $R$ is a decreasing  function with $\underset{x \to 0}{\lim} \ R(x) = + \infty$ and $\underset{x \to + \infty}{\lim} R(x) = 0$. Then, we can partially extend this inequality for multicurves (by linearity) and for measured lamination (by continuity), that is:
 $$
 i(\gamma,\lambda) \leq  \frac{l_{\lambda}(X)}{R(l_{\gamma}(X))}.
 $$

  Now in our case, if $\gamma$ is such that $\frac{\epsilon_0}{2} < l_{\gamma}(X) \leq \epsilon_0 $
  we have for every curve $\delta$:
  $$
  i(\gamma,\delta) \leq \frac{l_{\delta}(X)}{R(\epsilon_0)}.
  $$

Assuming that $\mu$ is small enough so that $\mu R(\epsilon_0) \leq 1$, we get
$$\{\delta, l_{\delta}(X) \leq  \mu R(\epsilon_0 ) L \} \subset \{ \delta, i(\delta,\gamma) \leq \mu L ,l_{\delta}(X) \leq L \}$$
for any $L > 0$.
Indeed $i(\delta,\gamma)\leq \frac{1}{R(\epsilon_0 )} l_{\delta}(X) \leq \mu L$.
So, if $X \in \Tgn$ and $\delta_1, \dots ,\delta_k$ are the curves such that $\frac{\epsilon_0}{2} < l_{\delta_j}(X) \leq \epsilon_0 $, we get that:
\begin{equation*}
  \begin{array}{lll}
    (\mu R(\epsilon_0 ))^{6g-6} \frac{ \# \{\delta, l_{\delta}(X) \leq  \mu R(\epsilon_0) L \} }{(\mu R(\epsilon_0) L)^{6g-6}} & = & \frac{ \# \{\delta, l_{\delta}(X) \leq  \mu R(\epsilon_0) L \} }{L^{6g-6}} \\
     & \leq & \frac{\# \{ \delta, i(\delta,\delta_j) \leq \mu L, j \in [1,k] ,l_{\delta}(X) \leq L \}}{L^{6g-6}}
  \end{array}
\end{equation*}
\newpage
and, by taking the limit $L \to \infty$, we find that :
$$
(\mu R(\epsilon_0 ))^{6g-6} B(X) \leq \mu_{Th}(X){\{ \lambda \in \mathcal{ML}, i(\lambda,\delta_j) \leq \mu , j \in [1,k] \}},
$$
where $B(X)$ is the Thurston volume of the unit ball in the space of lamination (cf. Definition \ref{DefB} above).

In this way, we derive that
\begin{equation*}
  \begin{array}{ll}
    \int \fem d \nu & \geq  \int \mathds{1}_{l_{sys}(X) \in [\frac{4 \epsilon_0}{6},\frac{5 \epsilon_0}{6}]} \mathds{1}_{j_{\mu}(X,\lambda) = 1} d \nu \\
     & = \int_{\Theta} \mathds{1}_{l_{sys}(X) \in [\frac{4 \epsilon_0}{6},\frac{5 \epsilon_0}{6}]} \int_{\ML} \mathds{1}_{i(\lambda,\delta_j) \leq \mu/2, j \in [1,k]} d \mu_{Th}(X)(\lambda) d \mu_{WP}(X) \\
     & \geq  K_{vol} \mu^{6g-6}
  \end{array}
\end{equation*}
where $\Theta$ is a fondamental domain of the $Mod(S)$-action on $\Tgn$ and
$$K_{vol}=\left( \frac{R(\epsilon_0)}{2} \right)^{6g-6} \int_{\Theta} \mathds{1}_{l_{sys}(X) \in [\frac{4 \epsilon_0}{6},\frac{5 \epsilon_0}{6}]} B(X) d \mu_{WP}(X).$$

\subsection{End of the proof of the main result}

We now prove Theorem \ref{Thprincipal}. Keeping the same notations and objects as before, we suppose that the earthquake flow is polynomially mixing in the sense of the inequality \ref{eq2}.

Recall that the inequality \eqref{equGe} yields
\begin{equation*}
\nu(\Omega_{\epsilon_0,\mu}) < K \| f_{\epsilon_0,\mu} \|_{Lip} \mu^d,
\label{poly}
\end{equation*}
where $d \in \mathbb{R^{+*}}$ and $K$ a synthetic constant.

By injecting the estimates on $\|\hfem\|_{Lip}$ and $\nu(\Omega_{\epsilon_0,\mu})$ from the two previous subsections, we have:

$$
\mu^{6g-5} < K \mu^d.
$$

Since this inequality is false (for $\mu$ small enough) when $d > 6g-5$ the proof of Theorem \ref{Thprincipal} is complete.

\section*{Acknowledgements}
The author warmly thank Carlos Matheus for his guidance on the topic, and his knowledgeable feedbacks. We are also very grateful to Mingkun Liu for early discussions and insightful comments.
\bibliographystyle{plainnat}
\bibliography{biliographie.bib}

\end{document}